\title{\LARGE \bf Synchronization Patterns in Networks of Kuramoto
  Oscillators: \\A Geometric Approach for Analysis and Control}
\author{Lorenzo Tiberi, Chiara Favaretto, Mario Innocenti, Danielle S.
  Bassett, and Fabio Pasqualetti%
  \thanks{This material is based upon work supported by NSF award
    \#BCS-1430279 and ARO award 71603NSYIP. Lorenzo Tiberi and Fabio
    Pasqualetti are with the Mechanical Engineering Department,
    University of California at Riverside, \href{mailto:
      ltiberi@ucr.edu}{\texttt{ltiberi@ucr.edu}},
    \href{mailto:fabiopas@engr.ucr.edu}{\texttt{fabiopas@engr.ucr.edu}}. Chiara
    Favaretto is with the Department of Information Engineering,
    University of Padova, \href{mailto:
      chiara.favaretto.2@phd.unipd.it}{\texttt{chiara.favaretto.2@phd.unipd.it}}. Danielle
    S. Bassett is with the Departments of Bioengineering and
    Electrical and Systems Engineering, University of Pennsylvania,
    \href{mailto:dsb@seas.upenn.edu}{\texttt{dsb@seas.upenn.edu}}.
    Mario Innocenti is with the Department of Electrical Systems and
    Automation, University of Pisa,
    \href{mailto:mario.innocenti@unipi.it}{\texttt{mario.innocenti@unipi.it}}.}}
\newtheorem{theorem}{Theorem}[section]
\newtheorem{definition}{Definition}
\newtheorem{corollary}[theorem]{Corollary}
\newtheorem{remark}{Remark}
\newtheorem{example}{Example}
\newcommand{\setdef}[2]{\{#1 \; : \; #2\}}
\newcommand{\until}[1]{\{1,\dots,#1\}}
\newcommand{\Image}{\operatorname{Im}}
\newcommand{\real}{\mathbb{R}}
\newcommand{\transpose}{\mathsf{T}} 
\newcommand{\mc}{\mathcal}
\newcommand{\A}{\bold{A}}
\DeclareSymbolFont{bbold}{U}{bbold}{m}{n}
\DeclareSymbolFontAlphabet{\mathbbold}{bbold}
\newcommand\oprocendsymbol{\hbox{$\square$}}
\newcommand\oprocend{\relax\ifmmode\else\unskip\hfill\fi\oprocendsymbol}
\begin{document}
\maketitle

\thispagestyle{empty}
\pagestyle{empty}

\begin{abstract}
  Synchronization is crucial for the correct functionality of many
  natural and man-made complex systems. In this work we characterize
  the formation of synchronization patterns in networks of Kuramoto
  oscillators. Specifically, we reveal conditions on the network
  weights and structure and on the oscillators' natural frequencies
  that allow the phases of a group of oscillators to evolve
  cohesively, yet independently from the phases of oscillators in
  different clusters. Our conditions are applicable to general
  directed and weighted networks of heterogeneous
  oscillators. Surprisingly, although the oscillators exhibit
  nonlinear dynamics, our approach relies entirely on tools from
  linear algebra and graph theory. Further, we develop a control
  mechanism to determine the smallest (as measured by the Frobenius
  norm) network perturbation to ensure the formation of a desired
  synchronization pattern. Our procedure allows us to constrain the
  set of edges that can be modified, thus enforcing the sparsity
  structure of the network perturbation. The results are validated
  through a set of numerical examples.
\end{abstract}

\section{Introduction}
Synchronization of coupled oscillators is everywhere in nature
\cite{Lewis2014, Strogatz2000, Ferrari2015} and in several man-made
systems, including power grids \cite{Doerfler2014} and computer
networks \cite{Nishikawa2015}. While some systems require complete
synchronization among all the parts to function properly
\cite{Danino2010, Kim2010}, others rely on cluster or partial
synchronization \cite{DAP-NEL-RS-DG-JKP:07}, where subsets of nodes
exhibit coherent behaviors that remain independent from the evolution
of other oscillators in the network. For example, while partial
synchronization patterns have been observed in healthy individuals
\cite{Schnitzler2005}, complete synchronization in neural systems is
associated with degenerative diseases including Parkinson's and
Huntington's diseases \cite{Hammond2007,Banaie2009}, and epilepsy
\cite{Lehnertz2009}. Cluster synchronization has received attention
only recently, and several fundamental questions remain unanswered,
including the characterization of the network features enabling the
formation of a desired pattern, and the development of control
mechanisms to enforce the emergence of clusters.

In this paper we focus on networks of Kuramoto oscillators
\cite{Kuramoto1975}, and we characterize intrinsic and topological
conditions that ensure the formation of desired clusters of
oscillators. Our network model is motivated by a large body of
literature showing broad applicability of the Kuramoto model to
virtually all systems that exhibit synchronization
properties. Although Kuramoto networks exhibit nonlinear dynamics, we
adopt tools from linear algebra and graph theory to characterize
network conditions enabling the formation of a given synchronization
pattern. Further, we design a control mechanism to perturb (a subset
of) the network weights so as to enforce or prevent desired
synchronization patterns.

\noindent
\textbf{Related work} Complete synchronization in networks of Kuramoto
oscillators has been extensively studied, e.g., see
\cite{Gardenes2007,Zhang2014}. It has been shown that synchronization of all nodes
emerges when the coupling strength among the agents is sufficiently
larger than the heterogeneity of the oscillators' natural
frequencies. Partial synchronization and pattern formation have
received considerably less attention, with the literature being
composed of only few recent works. In \cite{Pecora2014} it is shown
how symmetry of the interconnections may lead to partial
synchronization. Methods based on graph symmetry have also been used
to find all possible clusters in networks of Laplacian-coupled
oscillators \cite{Sorrentino2016}. The relationship between
clusterization and network topology has been studied in \cite{Lu2010}
for unweighted interconnections. In \cite{Dahms2012}, the emergence
and the stability of groups of synchronized agents within a network
has been studied for different classes of dynamics, like delay-coupled
laser models and neuronal spiking models. Here, the approach of master
stability function has been used to characterize the results. In
\cite{CF-DSB-AC-FP:16,CF-AC-FP:17} the idea is put forth to study an
approximate notion of cluster synchronization in Kuramoto networks via
tools from linear systems theory. It is quantitatively shown how
cluster synchronization depends on strong intra-cluster and weak
inter-cluster connections, similarity with respect to the natural
frequencies of the oscillators within each cluster, and heterogeneity
of the natural frequencies of coupled oscillators belonging to
different subnetworks. With respect to this work, we focus on an exact
notion of cluster synchronization, identify necessary and sufficient
conditions on the network weights and oscillators' natural frequencies
for the emergence of a desired synchronization pattern, and exploit
our analysis to design a structural control algorithm for the
formation of a desired synchronization pattern.

The work that is closer to this paper is \cite{Schaub2016}, where the
authors relate cluster synchronization to the notion of an external
equitable partition in a graph. In fact, the notion of an external
equitable partition can be interpreted in terms of invariant subspaces
of the network adjacency matrix, a notion that we exploit in our
development. However, the analysis in \cite{Schaub2016} is carried out
with unweighted and undirected networks and, as we show in this paper,
the conditions in \cite{Schaub2016} may not be necessary when dealing
with directed and weighted networks. Further, our approach relies on
simple notions from linear algebra, and leads to the development of
our control algorithm for the formation of desired patterns.

\noindent
\textbf{Paper contributions} The contributions of this paper are
twofold. First, we consider a notion of exact cluster synchronization,
where the phases of the oscillators within each cluster remain equal to
each other over time, and different from the phases of the oscillators in
different clusters. We derive necessary and sufficient conditions for the
formation of a given synchronization pattern in directed and weighted
networks of Kuramoto oscillators. In particular we show
that cluster synchronization is possible if and only if (i) the
natural frequencies are equal within each cluster, and (ii) for each
cluster, the sum of the weights of the edges from every separate
group is the same for all nodes in the cluster. Second, we leverage
our characterization of cluster synchronization to develop a control
mechanism that modifies the network weights so as to ensure the
formation of a desired synchronization pattern. Our control method is
optimal, in the sense that it determines the smallest (measured by the
Frobenius norm) network perturbation for a given synchronization
pattern, and it guarantees the modification of only a desired subset
of the edge weights.

\noindent
\textbf{Paper organization} The rest of this paper is organized as
follows. Section \ref{sec:setup and preliminary} contains the problem
setup and some preliminary definitions. Section \ref{sec:
  synchronization conditions} contains our characterization of cluster
synchronization, and Section \ref{sec: control of cluster
  synchronization} contains our structural control algorithm for the
formation of a desired synchronization pattern.  Section \ref{future
  research and conclusions} concludes the paper.

\section{Problem setup and preliminary notions}
\label{sec:setup and preliminary}
Consider a network of heterogenous Kuramoto oscillators described by
the digraph $\mc G = (\mc V, \mc E)$, where $\mc V = \until{n}$
denotes the set of oscillators and
$\mc E \subseteq \mc V \times \mc V$ their interconnections. Let
$A = [a_{ij}]$ be the weighted adjacency matrix of $\mc G$, where
$a_{ij} \in \real$ if $(i,j) \in \mc E$ and $a_{ij} = 0$
otherwise. We assume that $\mc G$ is strongly connected \cite{CDG-GFR:01}.
Let $\theta_i \in \mathbb{R}$ denote the phase of the $i$-th
oscillator, whose dynamics reads as
\begin{align*}
  \dot \theta_i = \omega_i + \sum_{j =1}^n a_{ij} \sin (\theta_j -
  \theta_i) ,
\end{align*}
where $\omega_i$ is the natural frequency of the $i$-th
oscillator. The dynamics is a generalized version of the classic
Kuramoto model \cite{YK:75}.  Depending on the interconnection graph
$\mc G$, the adjacency matrix $A$, and the oscillators natural
frequencies, different oscillatory patterns are possible corresponding
to (partially) synchronized or chaotic states
\cite{mirchev2014cooperative}. In this work we are particularly
interested in the case where the phases of groups of oscillators
evolve cohesively within each group, yet independently from the phases
of oscillators in different groups. To formalize this discussion, let
$\mc P = \{\mc P_1, \dots , \mc P_m\}$ be a partition of $\mc V$, that
is, $\mc V = \cup_{i = 1}^m \mc P_i$ and
$\mc P_i \cap \mc P_j = \emptyset$ for all $i,j \in \until{m}$ with
$i \neq j$. We restrict our attention to the case $m > 1$. Throughout
the paper we will assume without loss of generality that, given
$\mc P = \{\mc P_1, \dots , \mc P_m\}$, the oscillators are labeled so
that
$\mc P_i = \{\sum_{j =1}^{i-1}|\mc P_j|+1 , \dots, \sum_{j =
  1}^{i}|\mc P_j| \}$, where $|\mc P_j|$ denotes the cardinality of
the set $\mc P_j$. While different notions of synchronization exist,
we will use the following definitions.

\begin{definition}{\bf \emph{(Phase synchronization)}}\label{def:
    phase sync}
  For the network of oscillators $\mc G = (\mc V, \mc E)$, the
  partition $\mc P = \{\mc P_1, \dots , \mc P_m\}$ is \emph{phase
    synchronizable} if, for some initial phases
  $\theta_1 (0) , \dots, \theta_n (0)$, it holds
  \begin{align*}
    \theta_i (t) = \theta_j (t),
  \end{align*}
  for all times $t \in \real_{\ge 0}$ and $i,j \in \mc P_k$, with
  $k \in \until{m}$. \oprocend
\end{definition}

\begin{definition}{\bf \emph{(Frequency synchronization)}}\label{def:
    frequency sync}
  For the network of oscillators $\mc G = (\mc V, \mc E)$, the
  partition $\mc P = \{\mc P_1, \dots , \mc P_m\}$ is \emph{frequency
    synchronizable} if, for some initial phases
  $\theta_1 (0) , \dots, \theta_n (0)$, it holds
  \begin{align*}
    \dot \theta_i (t) = \dot \theta_j (t),
  \end{align*}
  for all times $t \in \real_{\ge 0}$ and $i,j \in \mc P_k$, with
  $k \in \until{m}$. \oprocend
\end{definition}
Clearly, phase synchronization implies frequency synchronization,
while the converse statement typically fails to hold.



Finally, we define the characteristic matrix associated with a
partition $\mc P$ of the network nodes, which will be used to derive
our synchronization conditions in Section \ref{sec: synchronization
  conditions}.

\begin{definition}{\bf \emph{(Characteristic matrix)}}\label{def:
    characteristic subspace}
  For the network of oscillators $\mc G = (\mc V, \mc E)$ and the
  partition $\mc P = \{\mc P_1, \dots , \mc P_m\}$, the characteristic
  matrix of $\mc P$ is $V_{\mc P} \in \real^{n \times m}$, where
  \begin{align*}
    V_{\mc P} =
    \begin{bmatrix}
      v_1 & v_2 & \cdots & v_{m}
    \end{bmatrix}
                           ,
  \end{align*}
  and
  \begin{align*}
    v_i^\transpose =
    \big[
    \underbrace{
    \begin{matrix}
      0 & 0 & \cdots & 0
    \end{matrix}}_{ \sum_{j=1}^{i-1} |\mc P_j| }
                       \;
    \underbrace{
    \begin{matrix}
      1 & 1 & \cdots & 1
    \end{matrix}}_{ |\mc P_{i}| }
                       \;
    \underbrace{
    \begin{matrix}
      0 & 0 & \cdots & 0
    \end{matrix}}_{\sum_{j = i+1}^n |\mc P_{j}| }
    \big]
                       .
  \end{align*}
  \oprocend
\end{definition}

We conclude this section with an illustrative example.

\begin{figure}[t]
  \centering
  \includegraphics[width=.9\columnwidth]{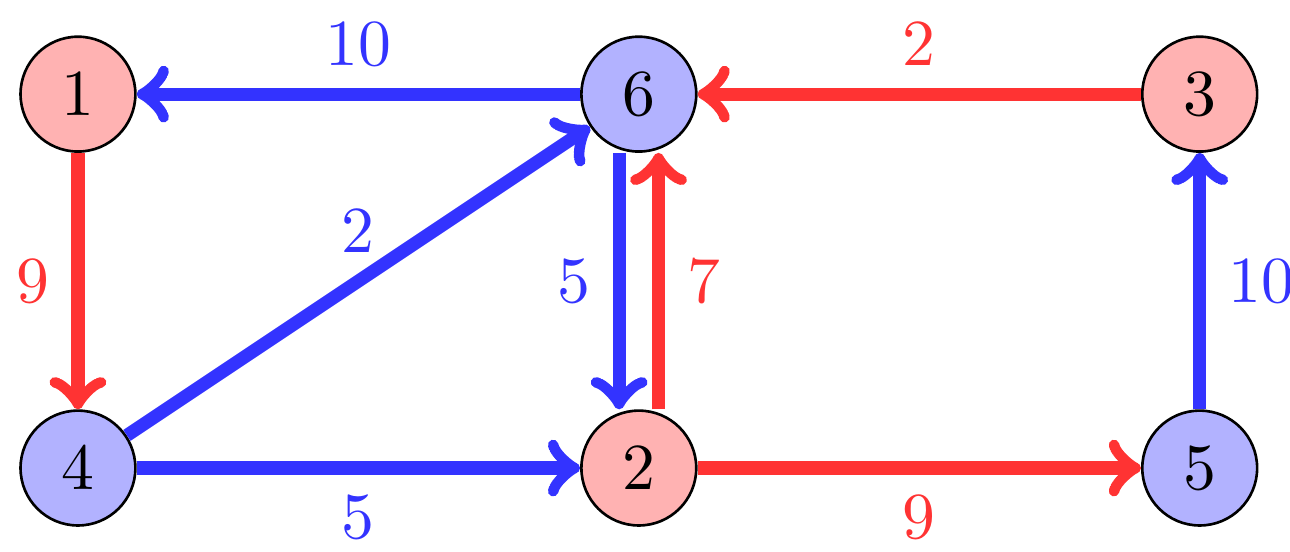}
  \caption{A network of oscillators with partitions
    $\mc P_1 = \{1,2,3\}$ and $\mc P_2 = \{4,5,6\}$. The sum of the
    weights of all edges $(i,j)$ is equal for each node $i$ of
    $\mc P_1$ (resp.  $\mc P_2$), with $j \in \mc P_2$ (resp.
    $j \in \mc P_1$). In Section \ref{sec: synchronization conditions}
    we show that this is a necessary condition for phase
    synchronization of the partition~$\mc P$.}
  \label{fig: example cluster synchronization}
\end{figure}

\begin{example}{\bf \emph{(Setup and definitions)}}\label{example:
    setup and definition}
  Consider the network of Kuramoto oscillators in Fig. \ref{fig:
    example cluster synchronization}, with graph
  $\mc G = (\mc V, \mc E)$, $\mc V = \{1,2,3,4,5,6\}$ and 
  partition $\mc P = \{\mc P_{1}, \mc P_{2} \}$. The graph $\mc G$ and
  the partition $\mc P$ are described by $A$ and $V_{\mc P}$ as
  follows:
  \begin{align*}
    A = 
    \begin{bmatrix}
      0 & 0 & 0 & 0 & 0 & 10\\
      0 & 0 & 0 & 5 & 0 & 5\\
      0 & 0 & 0 & 0 & 10 & 0\\
      9 & 0 & 0 & 0 & 0 & 0\\
      0 & 9 & 0 & 0 & 0 & 0\\
      0 & 7 & 2 & 2 & 0 & 0
    \end{bmatrix}
                          ,
                          V_\mc P = 
                          \begin{bmatrix}
                            1 & 0\\
                            1 & 0\\
                            1 & 0\\
                            0 & 1\\
                            0 & 1\\
                            0 & 1
                          \end{bmatrix}.
  \end{align*}
  \oprocend
\end{example}

\begin{figure}[tb]
  \centering \subfigure[]{
    \includegraphics[width=.9\columnwidth]{./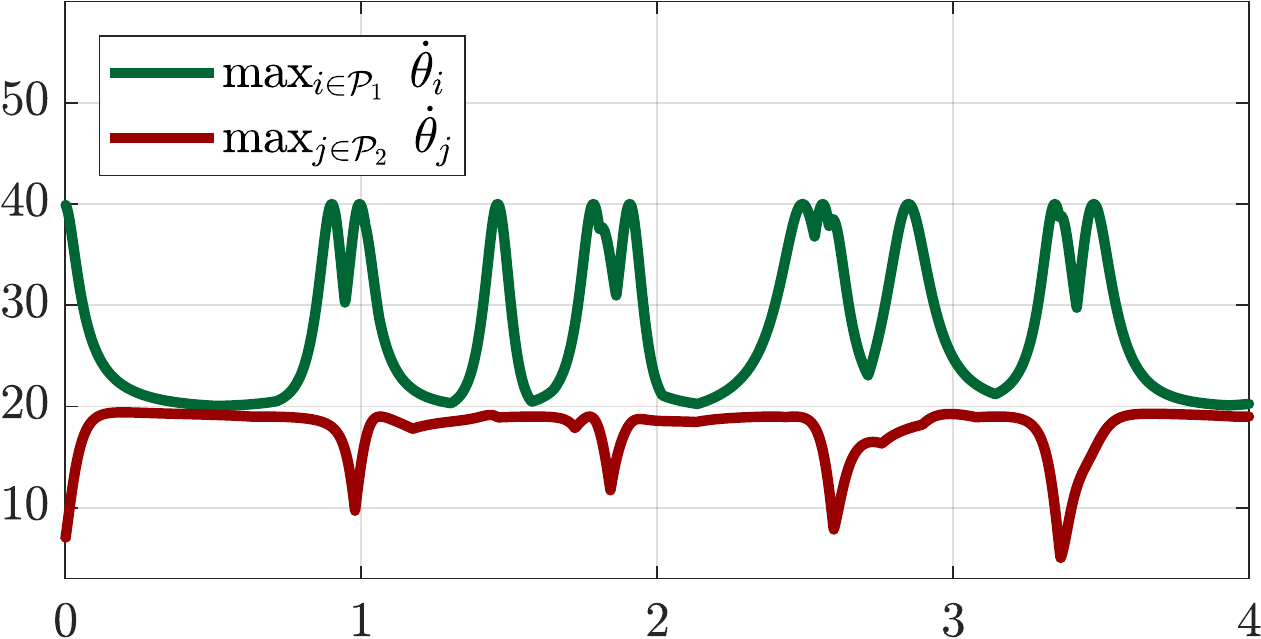}
    \label{fig: frequency no sync}
  } \subfigure[]{
    \includegraphics[width=.9\columnwidth]{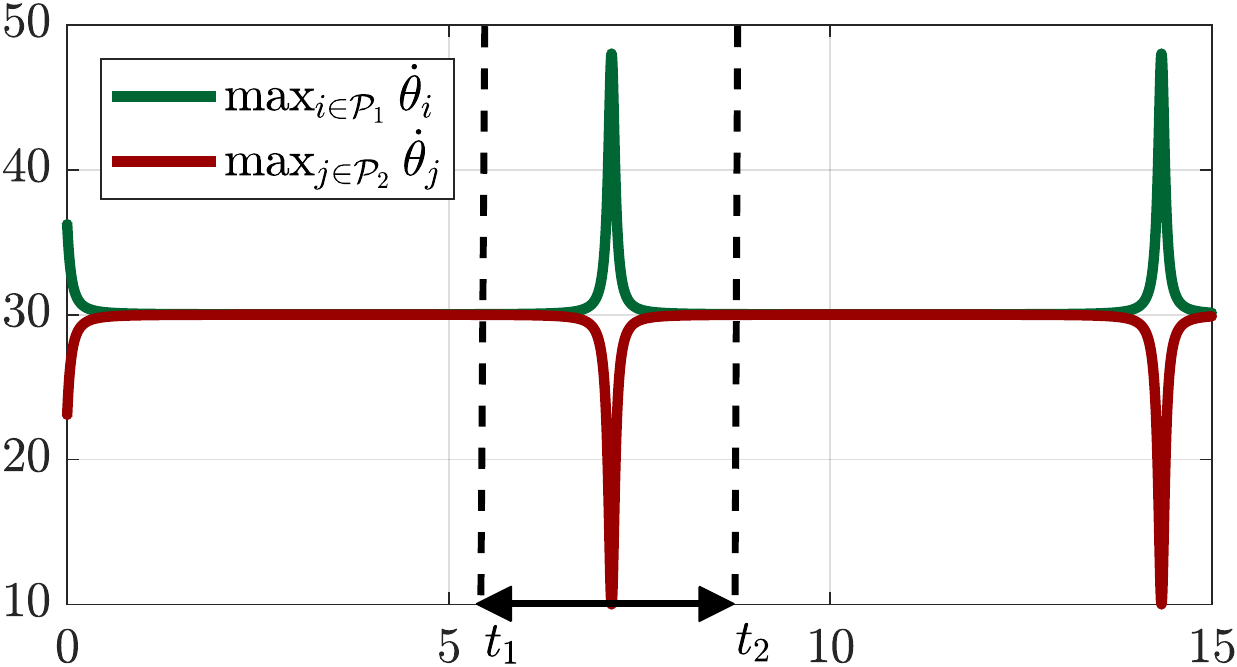}
    \label{fig: frequency sync}
  }
  \caption[Optional caption for list of figures]{For the network in
    Example \ref{fig: example cluster synchronization} with natural
    frequencies $\omega = [30,30,30,10,10,10]^\transpose$ Fig.  (a)
    shows the frequencies of the oscillators in the clusters
    $\mc P_1 = \{1,2,3\}$ and $\mc P_2 = \{4,5,6\}$ as a function of
    time. Notice that Assumption A1 is satisfied over the entire time
    interval. In Fig. (b) we let the frequencies be more homogeneous,
    $\omega = [19,19,19,10,10,10]^\transpose$. Assumption A1 is
    satisfied within bounded time intervals, such as $[t_1, t_2]$.}
  \label{fig:assumption A1}
\end{figure}

\section{Conditions for cluster synchronization}\label{sec:
  synchronization conditions} In this section we derive necessary and
sufficient conditions ensuring phase (hence frequency) synchronization
of a partition of oscillators. In particular, we show how
synchronization of a partition depends both on the interconnection
structure and weights, as well as the oscillators natural
frequencies. We make the following technical assumption.

\begin{itemize}
\item[(A1)] For the partition $\mc P = \{ \mc P_1, \dots, \mc P_m \}$
  there exists an ordering of the clusters $\mc P_i$ and an interval
  of time $[t_1, t_2]$, with $t_2 > t_1$, such that for all times
  $t \in [t_1, t_2]$:
  \begin{align*}
    \max_{i \in \mc P_1} \dot \theta_i > \max_{i \in \mc P_2} \dot
    \theta_i > \cdots > \max_{i \in \mc P_m} \dot \theta_i .
  \end{align*}

\end{itemize}

Assumption (A1) requires the phases of the oscillators in different
clusters to evolve with different frequencies, at least in some
interval of time. This assumption is in fact not restrictive, as this
is typically the case when the oscillators in different clusters have
different natural frequencies. Two cases where this assumption is
satisfied are presented in Fig.~\ref{fig:assumption A1}. A special
case where Assumption (A1) is not satisfied is discussed at the end of
this section.


\begin{theorem}{\bf \emph{(Cluster
      synchronization)}}\label{thm:cluster sync}
  For the network of oscillators $\mc G = (\mc V, \mc E)$, the
  partition $\mc P = \{ \mc P_1, \dots , \mc P_m \}$ is phase
  synchronizable if and only if the following conditions are
  simultaneously satisfied:

  \begin{enumerate}\setlength\itemsep{.5em}
  \item the network weights satisfy
    $\sum_{k \in \mc P_\ell} a_{i k} - a_{j k} = 0$ for every
    $i,j \in \mc P_z$ and $z,\ell \in \until{m}$, with $z \neq \ell$;

  \item the natural frequencies satisfy $\omega_i = \omega_j$ for
    every $k~\in~\until{m}$ and $i,j \in \mc P_k$.
  \end{enumerate}
\end{theorem}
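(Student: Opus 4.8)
The plan is to work throughout on the \emph{synchronization subspace} $\Image(V_{\mc P})\subseteq\real^n$, i.e.\ the set of phase vectors $\theta = V_{\mc P}\psi$ with $\psi=(\psi_1,\dots,\psi_m)^\transpose\in\real^m$; these are exactly the $\theta$ with $\theta_i=\theta_j$ whenever $i,j$ lie in the same cluster. Thus phase synchronizability of $\mc P$ is the statement that the oscillator flow admits a trajectory staying in $\Image(V_{\mc P})$ for all $t\ge 0$. The computational core of both implications is one elementary identity: if $\theta=V_{\mc P}\psi$ and $i\in\mc P_z$, grouping the coupling sum over clusters gives
\begin{align*}
  \dot\theta_i = \omega_i + \sum_{\ell=1}^m \Big(\sum_{k\in\mc P_\ell} a_{ik}\Big)\sin(\psi_\ell-\psi_z)
             = \omega_i + \sum_{\ell \neq z}\Big(\sum_{k\in\mc P_\ell} a_{ik}\Big)\sin(\psi_\ell-\psi_z),
\end{align*}
since the $\ell=z$ term vanishes, and hence for any $i,j\in\mc P_z$
\begin{align*}
  \dot\theta_i - \dot\theta_j = (\omega_i-\omega_j) + \sum_{\ell\neq z}\Big(\sum_{k\in\mc P_\ell}(a_{ik}-a_{jk})\Big)\sin(\psi_\ell-\psi_z). \tag{$\star$}
\end{align*}

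For \emph{sufficiency}, assume (1) and (2). Then the right-hand side of $(\star)$ vanishes for every $\psi\in\real^m$, so the reduced dynamics in the coordinates $\psi$ is well defined: let $\psi(t)$ solve $\dot\psi_z=\omega_{(z)}+\sum_{\ell\neq z} b_{z\ell}\sin(\psi_\ell-\psi_z)$, where $\omega_{(z)}:=\omega_i$ and $b_{z\ell}:=\sum_{k\in\mc P_\ell}a_{ik}$ for any $i\in\mc P_z$ are well defined precisely by (2) and (1). Using that the right-hand side of $(\star)$ is zero, one checks that $t\mapsto V_{\mc P}\psi(t)$ is a genuine solution of the original oscillator dynamics whose initial phases are constant on each cluster; hence $\mc P$ is phase synchronizable.

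For \emph{necessity}, suppose $\mc P$ is phase synchronizable and let $\theta(t)=V_{\mc P}\psi(t)$, $t\ge 0$, be a synchronized trajectory. Differentiating $\theta_i(t)=\theta_j(t)$ yields $\dot\theta_i(t)-\dot\theta_j(t)=0$, so $(\star)$ becomes the scalar identity
\begin{align*}
  (\omega_i-\omega_j) + \sum_{\ell\neq z} c^{ij}_\ell\,\sin\!\big(\psi_\ell(t)-\psi_z(t)\big) = 0, \qquad t\ge 0,
\end{align*}
with $c^{ij}_\ell:=\sum_{k\in\mc P_\ell}(a_{ik}-a_{jk})$. I would then invoke Assumption~(A1): along a synchronized trajectory $\max_{i\in\mc P_z}\dot\theta_i=\dot\psi_z$, so (A1) reads $\dot\psi_1(t)>\cdots>\dot\psi_m(t)$ on an interval $[t_1,t_2]$; in particular $\frac{d}{dt}(\psi_\ell-\psi_z)=\dot\psi_\ell-\dot\psi_z$ is nonzero on $[t_1,t_2]$ and, across $\ell\neq z$, these rates are pairwise distinct. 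Consequently each $\psi_\ell-\psi_z$ is strictly monotone on $[t_1,t_2]$, and the functions $t\mapsto\sin(\psi_\ell(t)-\psi_z(t))$, $\ell\neq z$, together with the constant function, are linearly independent on $[t_1,t_2]$. The identity above then forces $c^{ij}_\ell=0$ for all $\ell\neq z$ (condition (1)) as well as $\omega_i=\omega_j$ (condition (2)). Since $z$ and $i,j\in\mc P_z$ were arbitrary, (1) and (2) hold.

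The main obstacle is the penultimate step of the necessity argument: passing from a single scalar identity to the vanishing of every coefficient $c^{ij}_\ell$. For $m=2$ it is immediate — there is one term, and the only alternative to $c^{ij}_2=0$, namely $\sin(\psi_2-\psi_1)\equiv\mathrm{const}$, forces $\psi_2-\psi_1$ constant, contradicting $\dot\psi_1>\dot\psi_2$ from (A1). For $m\ge 3$ one must exclude cancellations among the terms $\sin(\psi_\ell-\psi_z)$, and this is exactly what the strict ordering in (A1) buys: it makes the instantaneous relative frequencies pairwise distinct and nonzero throughout $[t_1,t_2]$, from which the required linear independence follows (e.g.\ by an induction on the number of clusters that peels off the fastest one, or via a Wronskian estimate). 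Making this rigorous — and confirming that (A1) is the right hypothesis for it — is where the real work lies; the degenerate case in which (A1) fails is precisely the one the authors single out for separate discussion at the end of the section.
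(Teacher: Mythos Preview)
Your proof is correct, and the sufficiency direction matches the paper's exactly. For necessity you take a slightly different route: you work directly with $\dot\theta_i-\dot\theta_j=0$, obtaining the identity $(\omega_i-\omega_j)+\sum_{\ell\neq z}c^{ij}_\ell\sin(\psi_\ell-\psi_z)=0$, and then invoke (A1) to force all coefficients---including the constant term $\omega_i-\omega_j$---to vanish simultaneously. The paper instead differentiates once more, using $\ddot\theta_i-\ddot\theta_j=0$ to obtain $\sum_{z\neq\ell}\cos(\theta_z-\theta_\ell)(\dot\theta_z-\dot\theta_\ell)\,d_z=0$ with $d_z=\sum_{k\in\mc P_z}(a_{ik}-a_{jk})$; this eliminates the constant $\omega_i-\omega_j$, so the linear-independence argument concerns only the $m-1$ nonconstant functions, after which condition (ii) drops out trivially by substituting (i) back into the first-derivative identity. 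The paper's independence step is argued via repeated differentiation (the fastest cluster under (A1) eventually dominates all higher derivatives), which is essentially the ``peeling'' or Wronskian mechanism you sketch. Both routes hinge on (A1) in the same way and at a comparable level of rigor; the paper's decoupling buys a slightly cleaner independence claim (no constant function to handle), while yours is more economical in that it extracts (i) and (ii) in one stroke.
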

\begin{proof}
  \textit{(If)} Let $\theta_i = \theta_j $ for all
  $i,j \in \mc P_k$, $k = 1, \dots, m$. Let $i,j \in \mc P_\ell$, and
  notice that
  \begin{align*}
    \dot \theta_i - \dot \theta_j &= 
    \sum_{z\neq \ell} \sum_{k
    \in \mc P_z} a_{ik}
    \sin(\theta_k -  \theta_i) -
    a_{jk} \sin(\theta_k - \theta_j)\\
    &= \sum_{z\neq \ell} s_{z\ell} \sum_{k \in \mc P_{z}} a_{ik} -
      a_{jk} = 0,
  \end{align*}
  where we have used conditions (i) and (ii), and where
  $s_{zl} = \sin(\theta_z - \theta_\ell)$ depends on the clusters $z$
  and $\ell$ but not on $i,j,k$. Thus, when conditions (i) and (ii)
  are satisfied, $\theta \in \Image(V_{\mc P})$ implies
  $\dot \theta \in \Image(V_{\mc P})$, the image of $V_{\mc P}$.
  $\Image (V_{\mc P})$ is invariant and the network is phase
  synchronizable ($\theta (0) \in \Image (V_{\mc P})$).

  \textit{(Only if)} We first show that condition (i) is necessary for
  phase synchronization. Assume that the network is phase
  synchronized. Let $i,j \in \mc P_\ell$. At all times is must hold
  that
  \begin{align}\label{eq: nec freq}
    \begin{split}
      0 = \ddot \theta_i - \ddot \theta_j =& \sum_{z \neq \ell} \sum_{k
        \in \mc P_z} a_{ik} \cos(\theta_k - \theta_i) (\dot \theta_k -
      \dot \theta_i)\\
      &-\sum_{z \neq \ell} \sum_{k
        \in \mc P_z} a_{jk} \cos(\theta_k - \theta_j) (\dot \theta_k -
      \dot \theta_j)\\
      &= \sum_{z \neq \ell} c_{z \ell} v_{z \ell} \underbrace{\sum_{k \in \mc P_z}
        a_{ik} - a_{jk}}_{d_z} ,
    \end{split}
  \end{align}
  where $c_{z \ell} = \cos(\theta_z - \theta_\ell)$ and
  $v_{z \ell} = \dot{\theta}_z - \dot{\theta}_{\ell}$ depend on the
  clusters $z$ and $\ell$, but not on $i,j,k$. From (A1), possibly
  after reordering the clusters, in some nontrivial interval we have
  \begin{align*}
    \max_{i \in \mc P_1} \dot \theta_i > \max_{i \in \mc P_2} \dot
    \theta_i > \cdots > \max_{i \in \mc P_m} \dot \theta_i .
  \end{align*}
  Thus, \eqref{eq: nec freq} implies that, either $d_z = 0$ for all
  $z$ (thus implying condition (i)), or the functions $c_{z \ell}v_{z \ell}$
  must be linearly dependent at all times in the interval. Assume by
  contradiction that the functions $c_{zl}v_{zl}$ are linearly
  dependent at all times in the above interval. Then it must hold that
  \begin{align*}
    \sum_{z \neq \ell} d_z \frac{d^n}{dt^n} c_{z \ell} v_{z \ell} = 0,
  \end{align*}
  for every nonnegative integer $n$, where $\frac{d^n}{dt^n}$ denotes
  $n$-times differentiation. In other words, not only the functions
  $c_{z \ell}v_{z \ell}$ must be linearly dependent, but also all their
  derivatives, at some times in the above interval. Let $d_1 \neq 0$
  (if $d_1 = 0$, simply select the first nonzero coefficient), and
  $i,j \not\in \mc P_1$. Because of assumption (A1), there exists an
  integer $n$ such that
  $d_1 \frac{d^n}{dt^n} c_{1 \ell} v_{1 \ell} \gg
  d_{z}\frac{d^n}{dt^n} c_{z \ell} v_{z \ell}$, for all $z \neq
  1$. Thus, the functions $c_{z \ell}v_{z \ell}$ cannot be linearly
  dependent. We conclude that statement (i) is necessary for phase
  synchronization.

  We now prove that, when the network is phase synchronized, statement
  (i) implies statement (ii). This shows that statement (ii) is
  necessary for phase synchronization. Let the network be phase
  synchronized, and let $i,j \in \mc P_\ell$. We have
  \begin{align*}
    0 = \dot \theta_i - \dot \theta_j = \omega_i - \omega_j + \sum_{z
    \neq \ell} s_{z \ell} \underbrace{\sum_{k \in \mc P_z} a_{ik} -
    a_{jk}}_{=0} ,
  \end{align*}
  where $s_{z \ell} = \sin(\theta_z - \theta_{\ell})$ does not depend on the indices $i, j, k$ (see
  above), and where we have used that statement (i) is necessary for
  phase synchronization. To conclude, $\omega_i = \omega_j$, and
  statement (ii) is also necessary for phase synchronization.
\end{proof}

\begin{remark}{\bf \emph{(Necessity of assumption
      A1)}}\label{remark: assumption}
  Consider a network of oscillators with adjacency matrix
  \begin{align*}
    A =
    \begin{bmatrix}
      0 & a_{12} & 0 & 0\\
      a_{21} & 0 & a_{23} & 0\\
      0 & a_{32} & 0 & a_{34}\\
      0 & 0 & a_{43} & 0\\ 
    \end{bmatrix}
    ,
  \end{align*}
  and natural frequencies $\omega_i = \bar \omega$ for all
  $i\in \until{4}$. Notice that condition (i) in Theorem
  \ref{thm:cluster sync} is not satisfied. Let
  $\theta_1 (0)= \theta_2 (0)$ and
  $\theta_3 (0)= \theta_4 (0)= \theta_1 (0)+ \pi$, and notice that
  $\dot \theta_i = \bar \omega$ at all times and for all
  $i\in \until{4}$ (Assumption A1 is not satisfied). In other words,
  the partition $\mc P = \{\mc P_1,\mc P_2\}$, with
  $\mc P_1 = \{1,2\}$ and $\mc P_2 = \{3,4\}$ is phase synchronized,
  independently of the interconnection weights among the
  oscillators. Thus, condition (i) in Theorem may not be necessary
  when Assumption A1 is not satisfied.  \oprocend
\end{remark}

Let $A \odot B$ denote the Hadamard product between $A$ and $B$
\cite{RAH-CRJ:85}, and $\Image(V_{\mc P})^\perp$ the orthogonal
subspace to $\Image(V_{\mc P})$.

\begin{corollary}{\bf \emph{(Matrix condition for
      synchronization)}}\label{crl:condition equivalence}
  Condition $(i)$ in Theorem \ref{thm:cluster sync} is equivalent to
  $\bar{V}_{\mc P}^\transpose \bar{A} V_{\mc P} = 0$, where $\bar V_{\mc P}\in\mathbb{R}^{n\times(n-m)}$ satisfies
  $\Image(\bar V_{\mc P}) = \Image(V_{\mc P})^\perp$, and
  \begin{align}\label{eq: bar A}
    \bar{A} = A-A\odot V_{\mc P}V_{\mc P}^\transpose .
  \end{align}
\end{corollary}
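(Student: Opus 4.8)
The plan is to unpack the claimed identity into an explicit, row-wise condition on $\bar A V_{\mc P}$ and match it against condition (i). First I would compute $V_{\mc P} V_{\mc P}^\transpose$: with the node labeling fixed in Section \ref{sec:setup and preliminary}, its $(i,k)$ entry equals $1$ when $i$ and $k$ lie in the same cluster and $0$ otherwise. Hence the Hadamard product $A \odot V_{\mc P} V_{\mc P}^\transpose$ retains exactly the intra-cluster entries of $A$, so $\bar A = A - A \odot V_{\mc P}V_{\mc P}^\transpose$ has entries $\bar a_{ik} = a_{ik}$ when $i$ and $k$ belong to different clusters and $\bar a_{ik} = 0$ otherwise (in particular the entire diagonal is zeroed).

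Next I would compute $\bar A V_{\mc P}$ column by column: its $(i,\ell)$ entry is $\sum_{k \in \mc P_\ell} \bar a_{ik}$, which equals $0$ if $i \in \mc P_\ell$ and equals $\sum_{k \in \mc P_\ell} a_{ik}$ if $i \in \mc P_z$ for some $z \neq \ell$. I then claim that condition (i) of Theorem \ref{thm:cluster sync} is equivalent to the statement that rows $i$ and $j$ of $\bar A V_{\mc P}$ coincide whenever $i,j \in \mc P_z$ for some $z$. Indeed, for a column index $\ell \neq z$ this equality of entries is precisely $\sum_{k \in \mc P_\ell}(a_{ik}-a_{jk}) = 0$, i.e. condition (i); and for the column index $\ell = z$ both entries vanish by the previous computation, so that column imposes no additional constraint. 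Therefore condition (i) holds if and only if every column of $\bar A V_{\mc P}$ is constant on each cluster, equivalently, if and only if every column of $\bar A V_{\mc P}$ lies in $\Image(V_{\mc P})$ (equivalently, $\bar A V_{\mc P} = V_{\mc P} M$ for some $M \in \real^{m \times m}$), since $\Image(V_{\mc P})$ consists exactly of the vectors that are constant on each cluster.

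Finally I would translate "columns in $\Image(V_{\mc P})$'' into the stated matrix equation. Since $\Image(\bar V_{\mc P}) = \Image(V_{\mc P})^\perp$ and $(\Image(V_{\mc P})^\perp)^\perp = \Image(V_{\mc P})$, a vector $w$ lies in $\Image(V_{\mc P})$ if and only if $\bar V_{\mc P}^\transpose w = 0$. Applying this to each column $\bar A V_{\mc P} e_\ell$ shows that all columns of $\bar A V_{\mc P}$ lie in $\Image(V_{\mc P})$ if and only if $\bar V_{\mc P}^\transpose \bar A V_{\mc P} = 0$, which together with the previous step yields the equivalence.

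This corollary is essentially bookkeeping, so I do not expect a genuine obstacle; the one point requiring care is the role of the Hadamard subtraction. Condition (i) only constrains pairs of clusters with $z \neq \ell$, whereas the matrix equation $\bar V_{\mc P}^\transpose \bar A V_{\mc P} = 0$ also involves the "diagonal'' contributions $\ell = z$, and it is precisely the subtraction of $A \odot V_{\mc P} V_{\mc P}^\transpose$ that kills the intra-cluster entries and forces those contributions of $\bar A V_{\mc P}$ to vanish, so no spurious constraint is introduced. Making this step explicit is the crux of the argument.
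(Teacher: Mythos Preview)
Your argument is correct and follows the same outline as the paper: both compute the entries of $\bar A$ and of $\bar A V_{\mc P}$ explicitly, then identify the resulting row conditions with condition (i). The only noteworthy difference is in the last step: the paper fixes a particular $\bar V_{\mc P}$ whose columns are pairwise-difference vectors (so that each entry of $\bar V_{\mc P}^\transpose \bar A V_{\mc P}$ literally reads $\sum_{k\in\mc P_j}(a_{rk}-a_{sk})$), whereas you argue basis-free via ``columns of $\bar A V_{\mc P}$ lie in $\Image(V_{\mc P})$ $\Leftrightarrow$ $\bar V_{\mc P}^\transpose \bar A V_{\mc P}=0$''. Your version has the small advantage of covering every admissible $\bar V_{\mc P}$ at once, without the implicit step that two full-rank matrices with the same image give equivalent conditions.
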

\vspace{2mm}
\begin{proof}
  Let $\bar A = [\bar a_{ij}]$ and $A = [a_{ij}]$. Notice that
  $\bar a_{ij} = a_{ij}$ when $i$ and $j$ belong to different
  clusters, and $\bar a_{ij} = 0$ when $i$ and $j$ belong to the same
  cluster. Thus,
  \begin{align*}
    [\bar{A}V_{\mc P}]_{ij} = \begin{cases}
      \sum_{k \in \mc P_j} a_{ik} , & \text{if } i \notin \mc P_{j},\\
      0, & \text{if } i \in \mc P_{j} .
    \end{cases}
  \end{align*}
  Select $\bar{V}_{\mc P}$ so that
  $\bar{V}_{\mc P} = [\bar v_1 \cdots \bar v_{n-m} ]$ and
  $\bar v_i^\transpose x = x_r - x_s$, with $r,s \in \mc P_\ell$,
  for a vector $x$ of compatible dimension. Then,
  \begin{align*} 
    [\bar{V}_{\mc P}^\transpose\bar{A}V_{\mc P}]_{ij}=
    \begin{cases}
      \sum_{k \in \mc P_{j}} a_{rk}-a_{sk} , & r,s \notin \mc P_j ,\\
      0, & r,s \in \mc P_j ,
    \end{cases}
  \end{align*}    
  where $r,s$ are the nonzero indices of $\bar v_{i}$.
\end{proof}

\section{Control of cluster synchronization}\label{sec: control of
  cluster synchronization}
In the previous section we derive conditions on the network of
oscillators to guarantee phase and frequency synchronization. These
conditions are rather stringent, and are typically not satisfied for
arbitrary partitions and interconnection weights. In this section we
develop a control mechanism to modify the oscillators' interconnection
weights so as to guarantee synchronization of a given
partition. Specifically, we study the following minimization problem:
\begin{subequations}
\label{eq:optimproblem}
\begin{align}
  \min_{\Delta} \;\;\;\;\;\;&\|\Delta\|^{2}_{F}\\
  \text{s.t.} \;\;\;\;\;\;&\bar{V}_{\mc P}^{\transpose} \left[\bar{A} + \Delta\right]V_{\mc P} = 0 \label{constraint1}\\
                            &\Delta \in \mc{H} \label{constraint2}
\end{align}
\end{subequations}
where $\|\Delta\|_{F}$ denotes the Frobenius norm of the matrix
$\Delta$, $\bar A$ is as in \eqref{eq: bar A}, and $\mc H$ encodes a
desired sparsity pattern of the perturbation matrix $\Delta$. For
example, $\mc H$ may represent the set of matrices compatible with the
graph $\mc G = (\mc V, \mc E)$, that is,
$\mc H = \setdef{M}{M \in \real^{|\mc V| \times |\mc V|} \text{ and
  }m_{ij}= 0 \text{ if } (i,j)\not \in \mc E}$. The constraint
\eqref{constraint1} reflects the invariance condition in Corollary
\ref{crl:condition equivalence} and, together with condition (ii) in
Theorem \ref{thm:cluster sync}, ensures synchronization of the
partition $\mc P$. Thus, the minimization problem
\eqref{eq:optimproblem} determines the smallest perturbation of the
interconnection weights that guarantees synchronization of a partition
$\mc P$ and satisfies desired sparsity constraints. It should be
observed that, given the solution $\Delta^*$ to
\eqref{eq:optimproblem}, the modified adjacency matrix is $A+\Delta^*$
even if the constraint \eqref{constraint1} is expressed in terms of
$\bar{A}$. This follows from the fact that connections among nodes of
the same cluster do not affect the synchronization properties of the
partition $\mc P = \{\mc P_1, \dots , \mc P_m\}$ (see Corollary
\ref{crl:condition equivalence}). 

To solve the minimization problem \eqref{eq:optimproblem}, we define
the following minimization problem by including the sparsity
constraints \eqref{constraint2} into the cost function:
\begin{subequations}
  \label{eq:optimproblem1} \begin{align}
  \min_{\Delta} \;\;\;\;\;\;&\|\Delta \oslash H\|^{2}_{F}\\
  \text{s.t.} \;\;\;\;\;\;&\bar{V}_{\mc P}^{\transpose} \left[\bar{A} +
                          \Delta\right]V_{\mc P} = 0  
\end{align}
\end{subequations}
where $\oslash$ denotes elementwise division, and $H$ satisfies
$h_{ij} = 1$ if there exists a matrix $M \in \mc H$ such that
$m_{ij} \neq 0$, and $h_{ij} = 0$ otherwise. Clearly, the minimization
problems \eqref{eq:optimproblem} and \eqref{eq:optimproblem1} are
equivalent, in the sense that $\Delta^*$ is a (feasible) solution to
\eqref{eq:optimproblem} if and only if it has finite cost in
\eqref{eq:optimproblem1}.

\begin{theorem}{\bf \emph{(Synchronization via structured
      perturbation)}}\label{thm:optim constraints}
  Let $T = [V_{\mc P} \; \bar V_{\mc P}]$, and let 
  \begin{align*}
    \begin{bmatrix}
          \tilde A_{11} & \tilde A_{12}\\
          \tilde A_{21} & \tilde A_{22}
        \end{bmatrix} = T^{-1} \bar A T.               
  \end{align*}
  The minimization problem \eqref{eq:optimproblem} has a solution if
  and only if there exists a matrix $\Lambda$ satisfying 
  \begin{align*}
    X &= (\bar V_\mc P \Lambda V_\mc P^{\transpose})\odot H, \text{
        and }
    \tilde A_{21} = \bar V_\mc P^{\transpose} X V_\mc P .
  \end{align*}

  Moreover, if it exists, a solution $\Delta^*$ to
  \eqref{eq:optimproblem} is
    \begin{align*}
    \Delta^* = T
    \begin{bmatrix}
      \tilde \Delta_{11}^* &  \tilde \Delta_{12}^*\\
      \tilde \Delta_{21}^* & \tilde \Delta_{22}^*
    \end{bmatrix}
                            T^{-1},
  \end{align*}
  where $\tilde \Delta_{11}^* = -V_\mc P^{\transpose}XV_\mc P$,
  $\tilde \Delta_{12}^* = -V_\mc P^{\transpose}X\bar V_\mc P$,
  $\tilde \Delta_{21}^* = -\tilde A_{21}$, and
  $\tilde \Delta_{22}^* = -\bar V_\mc P^{\transpose}X\bar V_\mc P$.
\end{theorem}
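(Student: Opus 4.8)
The plan is to work with the equivalent convex reformulation \eqref{eq:optimproblem1}: its objective $\|\Delta\oslash H\|_F^2$ is a convex (extended‑real‑valued) quadratic, and its only explicit constraint $\bar V_{\mc P}^\transpose[\bar A+\Delta]V_{\mc P}=0$ is affine. Since a convex quadratic that is bounded below on a nonempty polyhedron attains its infimum there (Frank--Wolfe), \eqref{eq:optimproblem1}, hence \eqref{eq:optimproblem}, has a solution exactly when it is feasible; so I would establish the ``if and only if'' by showing that feasibility is equivalent to consistency of the system $X=(\bar V_{\mc P}\Lambda V_{\mc P}^\transpose)\odot H$, $\tilde A_{21}=\bar V_{\mc P}^\transpose X V_{\mc P}$, and I would read the minimizer off the optimality conditions.

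First I would set up the coordinate change. Because $\Image(V_{\mc P})$ and $\Image(\bar V_{\mc P})=\Image(V_{\mc P})^\perp$ are complementary, $T=[\,V_{\mc P}\;\bar V_{\mc P}\,]$ is invertible; write $\tilde\Delta=T^{-1}\Delta T$ in $2\times 2$ blocks conformal with $T$. From $\bar A T=T\tilde A$ one extracts the identity $\bar V_{\mc P}^\transpose\bar A V_{\mc P}=(\bar V_{\mc P}^\transpose\bar V_{\mc P})\,\tilde A_{21}$, and since $\bar V_{\mc P}^\transpose V_{\mc P}=0$ one similarly gets $\bar V_{\mc P}^\transpose\Delta V_{\mc P}=(\bar V_{\mc P}^\transpose\bar V_{\mc P})\,\tilde\Delta_{21}$. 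As $\bar V_{\mc P}^\transpose\bar V_{\mc P}$ is invertible, constraint \eqref{constraint1} collapses to pinning a single block, $\tilde\Delta_{21}=-\tilde A_{21}$, leaving $\tilde\Delta_{11},\tilde\Delta_{12},\tilde\Delta_{22}$ free up to the sparsity pattern.

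Next I would impose optimality. For the Lagrangian $L(\Delta,\Lambda)=\|\Delta\oslash H\|_F^2+\langle\Lambda,\ \bar V_{\mc P}^\transpose[\bar A+\Delta]V_{\mc P}\rangle$, stationarity in the entries of $\Delta$ supported on $H$ (the remaining entries must vanish to keep the cost finite) yields, after rescaling $\Lambda$, that the optimizer has exactly the form $X=(\bar V_{\mc P}\Lambda V_{\mc P}^\transpose)\odot H$: the map $\Lambda\mapsto\bar V_{\mc P}\Lambda V_{\mc P}^\transpose$ is, in the Frobenius inner product, the adjoint of the constraint map $\Delta\mapsto\bar V_{\mc P}^\transpose\Delta V_{\mc P}$, and the Hadamard product with the $0/1$ matrix $H$ is the orthogonal projection onto the sparsity subspace $\mc H$. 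Imposing primal feasibility on this $X$ and rewriting it through the block identity above produces precisely the second stated condition $\tilde A_{21}=\bar V_{\mc P}^\transpose X V_{\mc P}$. Since all constraints are affine, the KKT conditions are necessary and sufficient for optimality (no constraint qualification is needed), so a minimizer exists if and only if this pair of conditions admits a solution $\Lambda$, and any resulting $X$ is a solution. It remains to transcribe $X$ back into the $T$‑coordinates: the constraint forces $\tilde\Delta^*_{21}=-\tilde A_{21}$, while transporting $X$ through the decomposition induced by $[\,V_{\mc P}\;\bar V_{\mc P}\,]$ (and using $V_{\mc P}^\transpose\bar V_{\mc P}=0$ to simplify) gives $\tilde\Delta^*_{11}=-V_{\mc P}^\transpose X V_{\mc P}$, $\tilde\Delta^*_{12}=-V_{\mc P}^\transpose X\bar V_{\mc P}$, $\tilde\Delta^*_{22}=-\bar V_{\mc P}^\transpose X\bar V_{\mc P}$, i.e.\ the displayed formula for $\Delta^*=T\,\tilde\Delta^*\,T^{-1}$; a direct check that this $\Delta^*$ is feasible, lies in $\mc H$, and satisfies stationarity closes the argument.

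I expect the main obstacle to be handling the three linear structures simultaneously — the sparsity subspace $\mc H$, the synchronization constraint \eqref{constraint1}, and the non‑orthogonal (hence non‑isometric) similarity $T$, under which the Frobenius norm is not invariant. The crux is to show that stationarity of the $H$‑weighted norm over the intersection of these affine sets pins the optimizer into the pattern‑restricted range $\{(\bar V_{\mc P}\Lambda V_{\mc P}^\transpose)\odot H:\Lambda\}$ — equivalently, that projecting the adjoint's range onto $\mc H$ exactly recovers the normal cone at the optimum — and, for the ``only if'' direction, that inconsistency of the resulting system forces the feasible set to be empty, which is where convexity together with affineness of all constraints is what makes the implication valid.
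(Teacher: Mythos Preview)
Your approach is essentially the same as the paper's: both work with the reformulation \eqref{eq:optimproblem1}, form the Lagrangian with multiplier matrix $\Lambda$, derive the stationarity condition $\Delta+(\bar V_{\mc P}\Lambda V_{\mc P}^\transpose)\odot H=0$ together with primal feasibility, and then pass to the $T$-coordinates to read off the four blocks $\tilde\Delta_{ij}^*$ and the consistency condition $\tilde A_{21}=\bar V_{\mc P}^\transpose X V_{\mc P}$ for $\Lambda$. Your write-up is in fact somewhat more careful than the paper's on two points: you explicitly argue attainment of the infimum via convexity (Frank--Wolfe on the restriction to the sparsity subspace), and you justify necessity and sufficiency of KKT via affineness of the constraints, whereas the paper simply equates the first-order conditions to zero and solves.
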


\begin{proof}
  We adopt the method of Lagrange multipliers to derive the optimality
  conditions for the problem \eqref{eq:optimproblem1}. The Lagrangian
  is
\begin{align*}
  \mc{L}(\Delta, \Lambda) = & \sum_{i = 1}^{n} \sum_{j=1}^{n} \delta_{ij}^2 h_{ij}^{-1} + \sum_{i=1}^{m} \lambda_i^{\transpose}\bar V_\mc P^{\transpose}(\bar{A}+\Delta) v_{i},
\end{align*}
where
$\Lambda= [ \lambda_1,\dots,\lambda_m ] \in \mathbb{R}^{(n-m)\times
  m}$ is a matrix collecting vectors of Lagrange multipliers, and
$v_i \in \mathbb{R}^{n}$ is the $i$-th column of $V_{\mc P}$. By equating
the partial derivatives of $\mc L$ to zero we obtain the following
optimality conditions:
\begin{subequations}
\begin{align}\label{eq:optimality condition1}
\frac{\partial \mc L}{\partial \lambda_i}&= 0  \Rightarrow  \bar V_\mc P (\bar{A}+\Delta)v_{i}=0,\\
\label{eq:optimality condition2}
\frac{\partial \mc L}{\partial \delta_{ij}}&= 0   \Rightarrow  2\delta_{ij}h_{ij}^{-1} +\sum_{k=1}^{m} \lambda_k^{\transpose} \bar v_i^\transpose v_{jk} = 0,
\end{align}
\end{subequations}
where $\bar v_i$ is the $i$-th row of $\bar{V}_{\mc P}$ and $v_{jk}$
is the entry $(j,k)$ of the matrix $V_\mc P$.  Finally,
\eqref{eq:optimality condition1} and \eqref{eq:optimality condition2}
can be rewritten as
\begin{subequations}\label{eq: opt}
\begin{align}\label{eq:optimality cond matrix 1}
\bar V_\mc P^\transpose (\bar{A}+\Delta)V_\mc P & = 0,\\
\Delta\oslash H+ \bar V_\mc P \Lambda V_\mc P^{\transpose} & = 0, \label{eq:optimality cond matrix 2}
\end{align}
\end{subequations}
where the factor 2 of \eqref{eq:optimality condition2} has been
included into the Lagrange multipliers. Applying the change of
coordinates $T = [V_{\mc P} \; \bar V_{\mc P}]$,
$\bar{A} = T \tilde{A} T^{-1}$ and $\Delta = T\tilde{\Delta}T^{-1}$,
with $I_d$ the identity matrix of dimension $d$, equation
\eqref{eq:optimality cond matrix 1} becomes
\begin{align*}
& \bar V_\mc P^\transpose T(\tilde A+\tilde\Delta)  T^{-1} V_\mc P=\\
& \begin{bmatrix}
0 & I_{n-m}\end{bmatrix} \begin{bmatrix}
\tilde A_{11}+\tilde{\Delta}_{11} & \tilde A_{12}+\tilde{\Delta}_{12}\\
\tilde A_{21}+\tilde{\Delta}_{21} & \tilde A_{22}+\tilde{\Delta}_{22}
\end{bmatrix}
\begin{bmatrix}
I_{m}\\ 0
\end{bmatrix}= 0,
\end{align*}
which leads to $\tilde \Delta_{21}^{*} = -\tilde A_{21}$.
Equation \eqref{eq:optimality cond matrix 2} is equivalent to
$\Delta+ (\bar V_\mc P \Lambda V_\mc P^{\transpose})\odot H = 0$,
which can be decomposed as
\begin{align*}
\underbrace{\begin{bmatrix}V_\mc P & \bar V_\mc P\end{bmatrix}}_{T}
\begin{bmatrix}
\tilde \Delta_{11} & \tilde \Delta_{12}\\
\tilde \Delta_{21} & \tilde \Delta_{22}
\end{bmatrix}
\underbrace{\begin{bmatrix}
V_\mc P^\transpose\\
\bar V_\mc P^\transpose
\end{bmatrix}}_{T^{-1}} + (\bar V_\mc P \Lambda V_\mc P^{\transpose})\odot H    = 0.
\end{align*}
Consequently,
\begin{equation}\label{eq: decomposition}
\begin{split}
(V_\mc P\tilde \Delta_{11}V_\mc P^\transpose & - \bar V_\mc P \tilde A_{12} V_\mc P^\transpose + V_\mc P \tilde \Delta_{12}\bar V_\mc P^\transpose + \bar V_\mc P \tilde \Delta_{22}\bar V_\mc P^\transpose) +\\
&  (\bar V_\mc P \Lambda V_\mc P^{\transpose})\odot H    = 0.
\end{split}
\end{equation}

Let $X =(\bar V_\mc P \Lambda V_\mc P^{\transpose})\odot H$. Recall
that $\bar V_\mc P^\transpose \bar V_\mc P= I_{n-m}$,
$V_\mc P^\transpose V_\mc P=I_m$, and
$\bar V_\mc P^\transpose V_\mc P= 0$. By pre-multiplicating equation
\eqref{eq: decomposition} by $\bar V^{\transpose}_\mc P$ and
post-multiplicating it by $V_\mc P$, we obtain
\begin{subequations}
\begin{align*}
-\tilde A_{21} + \bar V_\mc P^\transpose X V_\mc P = 0,
\end{align*}
which is a system of linear equations, that can be solved with respect
to the unknown $\Lambda$. Following the same reasoning of above, we
can obtain the following other three equations that entirely determine
the solution $\tilde \Delta_{11}$, $\tilde \Delta_{12}$, and
$\tilde \Delta_{22}$:
\begin{align*}
  \tilde \Delta_{11} + V_\mc P^{\transpose}XV_\mc P & = 0,\;
                                                      \tilde \Delta_{12} + V_\mc P^{\transpose}X\bar V_\mc P  = 0, \text{
                                                      and }\\ 
  \tilde \Delta_{22} + \bar V_\mc P^{\transpose}X\bar V_\mc P  &= 0.
\end{align*}
\end{subequations}
Finally, the optimal matrix $\Delta^{*}$, solution to the problem
\eqref{eq:optimproblem1}, is given in original coordinates as
\begin{align*}
\Delta^{*} = T\begin{bmatrix}
\tilde \Delta^{*}_{11} & \tilde \Delta^{*}_{12}\\
-\tilde A_{21} & \tilde \Delta^{*}_{22}
\end{bmatrix}T^{-1}.
\end{align*}

\end{proof}

\begin{figure*}[tb]
  \centering \subfigure[]{
    \includegraphics[width=.64\columnwidth]{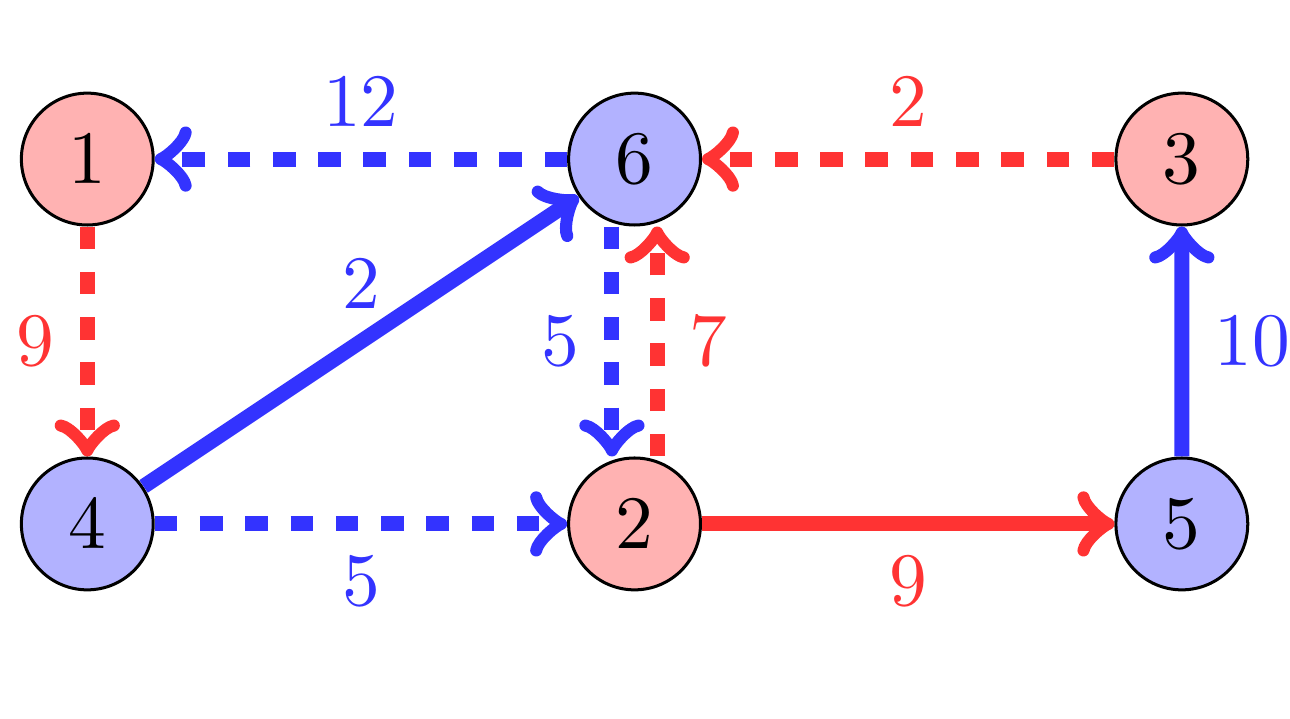}
    \label{fig: frequency no sync}
  } \subfigure[]{
    \includegraphics[width=.64\columnwidth]{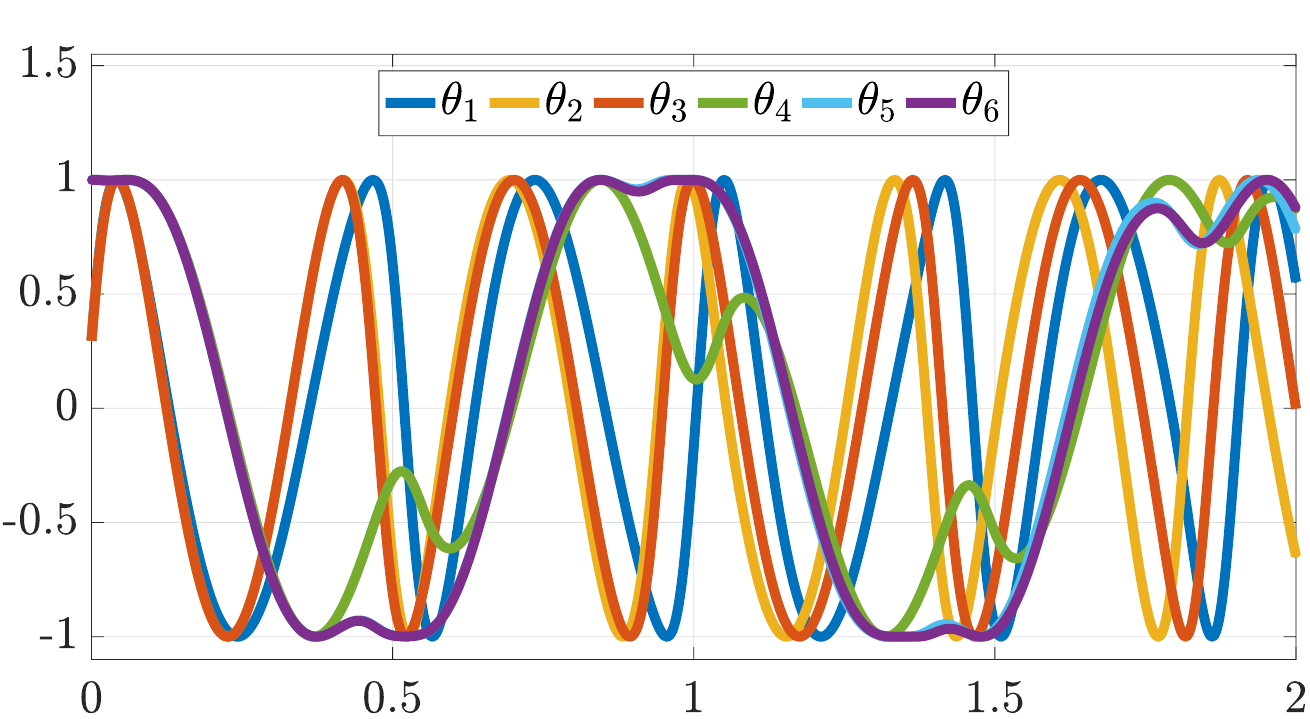}
    \label{fig: frequency sync}
  }
\subfigure[]{
    \includegraphics[width=.64\columnwidth]{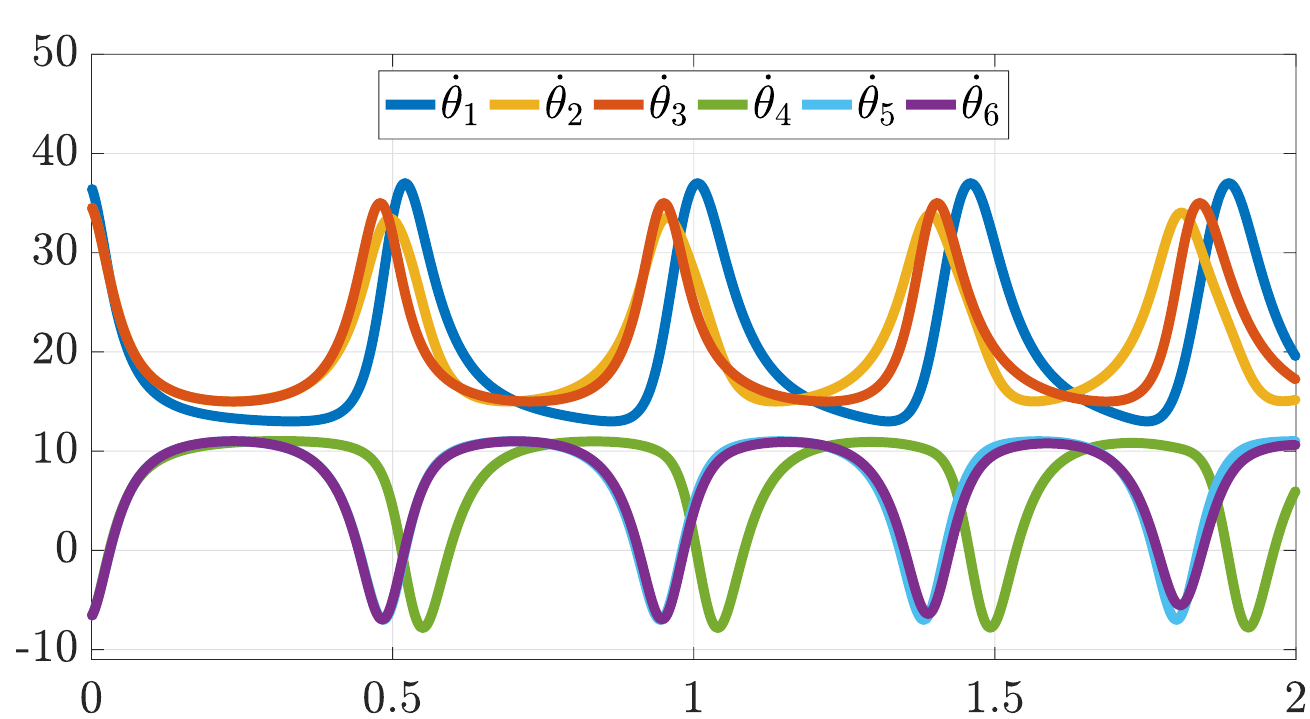}
    \label{fig: frequency sync}
  }
  \\
  \subfigure[]{
    \includegraphics[width=.64\columnwidth]{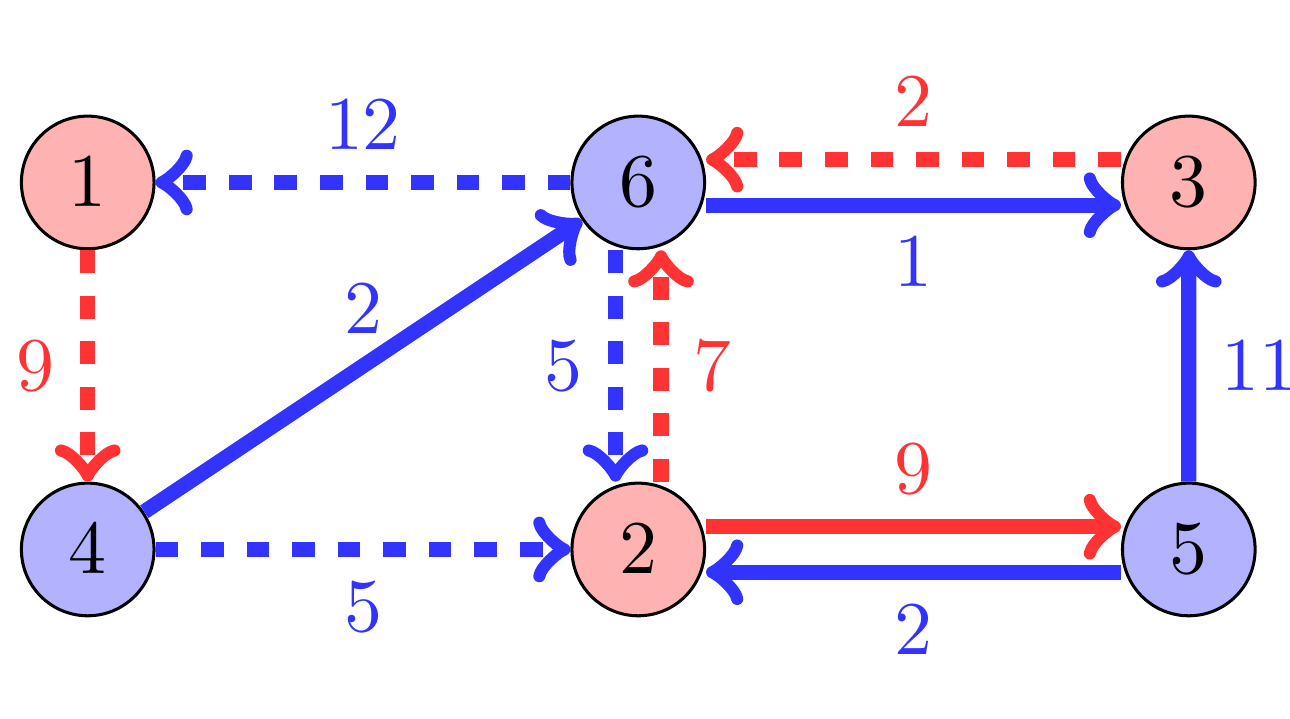}
    \label{fig: frequency no sync}
  } \subfigure[]{
    \includegraphics[width=.64\columnwidth]{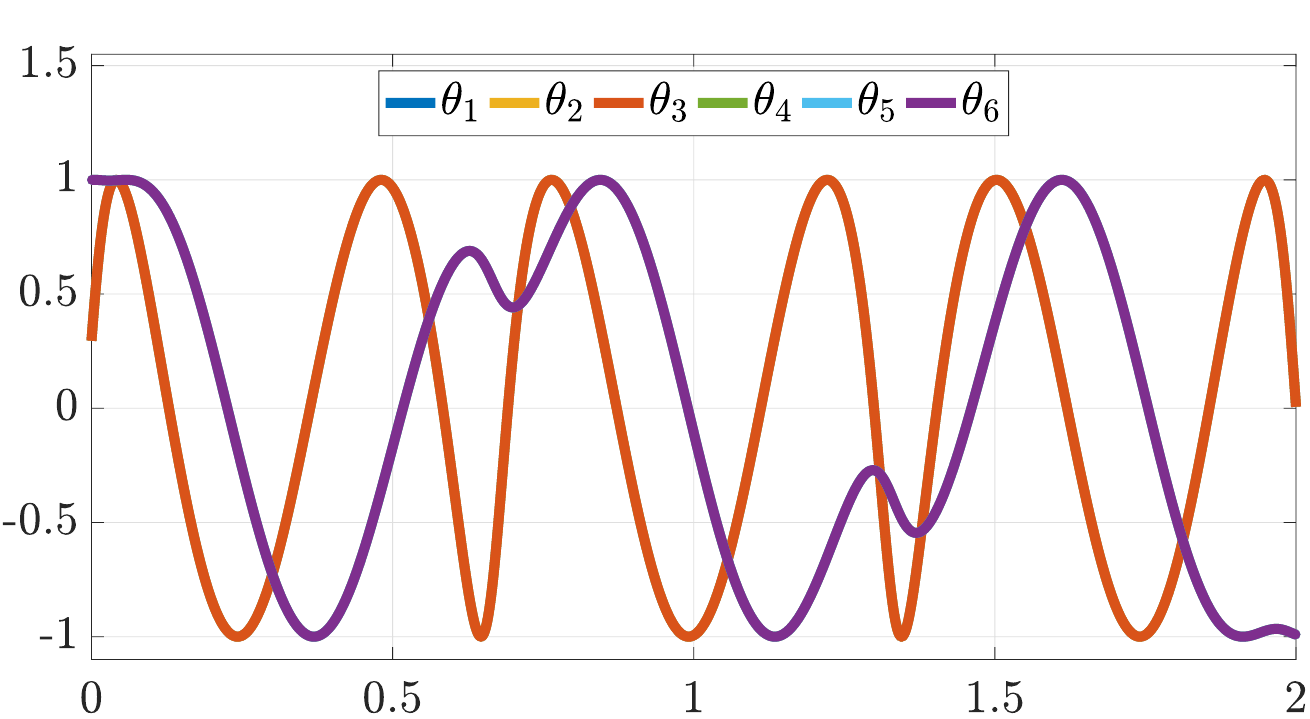}
    \label{fig: frequency sync}
  }
  \subfigure[]{
    \includegraphics[width=.64\columnwidth]{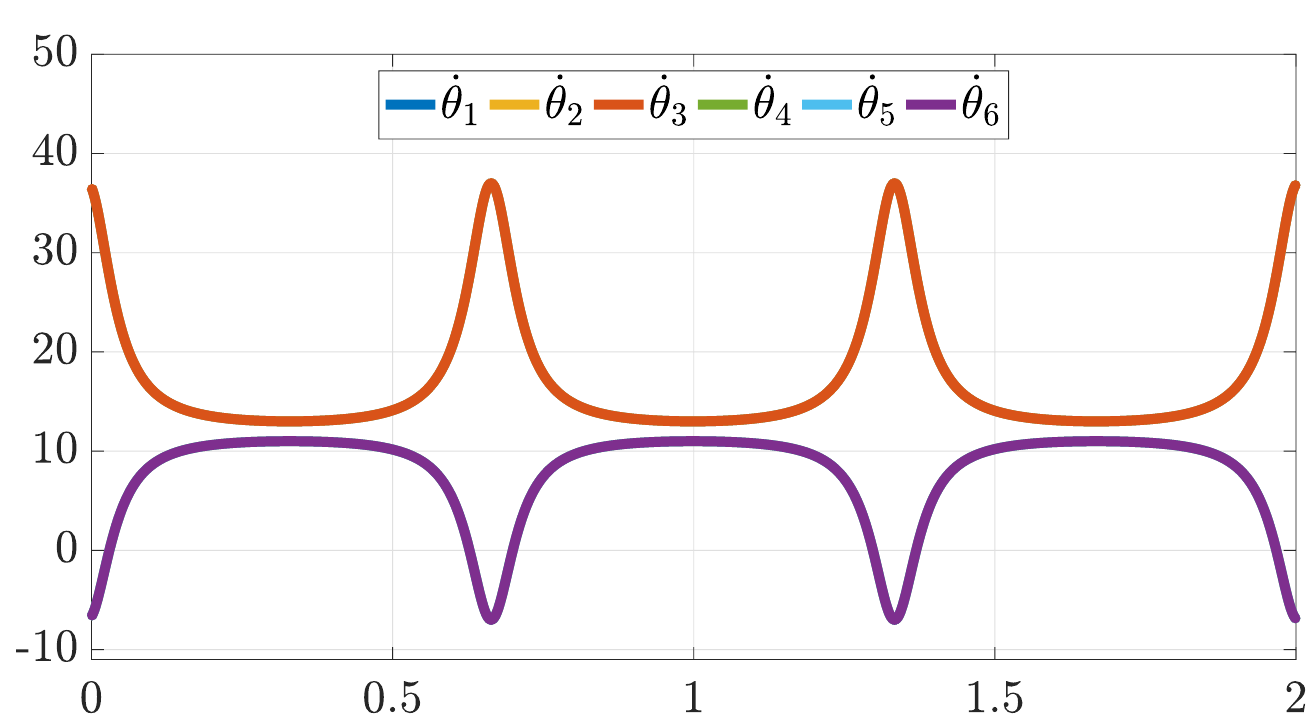}
    \label{fig: frequency sync}
  }
  \caption[Optional caption for list of figures]{Fig. (a) shows the
    network in Example \ref{example: sync network}, where the dashed
    (resp. solid) edges correspond to the zero (resp. unit) entries of
    $H$. The partition $\mc P = \{\mc P_1, \mc P_2\}$, with
    $\mc P_1 = \{1,2,3\}$ and $\mc P_2 = \{4,5,6\}$, is not
    synchronizable because, for instance, the sum of the weights of
    the incoming edges to nodes $1$ and $2$ is different (see Theorem
    \ref{thm:cluster sync}). Fig. (b) and (c) show the phases and
    frequencies of the oscillators as a function of time. Fig. (d)
    shows the modified network obtained from Theorem \ref{thm:optim
      constraints}, which satisfies condition (i) in Theorem
    \ref{thm:cluster sync} and leads to a synchronizable partition
    $\mc P$. When the natural frequencies are selected to satisfy
    condition (ii) in Theorem \ref{thm:cluster sync}, the oscillators'
    phases and frequencies are synchronized as illustrated in Fig. (e)
    and (f).}
  \label{fig:example synchronization 2}
\end{figure*}

Theorem \ref{thm:optim constraints} characterizes the smallest
(measured by the Frobenius norm) structured network perturbation that
ensures synchronization of a given partition. Without constraints, the
optimal perturbation has a straightforward expression.

\begin{corollary}{\bf \emph{(Unconstrained minimization
      problem)}}\label{corollary:optim noconstr}
  Let $\mc H = \setdef{M}{m_{ij} \neq 0 \text{ for all } i \text{ and
    } j}$. The minimization problem \eqref{eq:optimproblem} is
  always feasible, and its solution is
  \begin{align*}
    \Delta^* = -\bar{V}_{\mc P} \bar{V}_{\mc P}^{\transpose}\bar AV_{\mc
    P}V_{\mc P}^{\transpose} .
    \end{align*}
\end{corollary}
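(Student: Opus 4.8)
The plan is to obtain the result as a direct specialization of Theorem~\ref{thm:optim constraints}, and to double‑check it via an elementary least‑norm argument. First I would note that when $\mc H$ consists of \emph{all} matrices, the mask $H$ is the all‑ones matrix, so $\Delta\odot H=\Delta$ and $\Delta\oslash H=\Delta$; hence problems \eqref{eq:optimproblem} and \eqref{eq:optimproblem1} coincide, and in the notation of Theorem~\ref{thm:optim constraints} one has $X=\bar V_{\mc P}\Lambda V_{\mc P}^{\transpose}$.

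Next I would establish feasibility. Using the orthonormality relations $V_{\mc P}^{\transpose}V_{\mc P}=I_m$, $\bar V_{\mc P}^{\transpose}\bar V_{\mc P}=I_{n-m}$ and $\bar V_{\mc P}^{\transpose}V_{\mc P}=0$, the existence condition $\tilde A_{21}=\bar V_{\mc P}^{\transpose}XV_{\mc P}$ of Theorem~\ref{thm:optim constraints} collapses to $\tilde A_{21}=\Lambda$, which is solvable for every $\bar A$. Thus the unconstrained problem is always feasible, which is the first assertion. Substituting $X=\bar V_{\mc P}\tilde A_{21}V_{\mc P}^{\transpose}$ into the block formulas of Theorem~\ref{thm:optim constraints} and again invoking $V_{\mc P}^{\transpose}\bar V_{\mc P}=0$, each of $\tilde\Delta_{11}^{*}=-V_{\mc P}^{\transpose}XV_{\mc P}$, $\tilde\Delta_{12}^{*}=-V_{\mc P}^{\transpose}X\bar V_{\mc P}$ and $\tilde\Delta_{22}^{*}=-\bar V_{\mc P}^{\transpose}X\bar V_{\mc P}$ vanishes, leaving only $\tilde\Delta_{21}^{*}=-\tilde A_{21}$. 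Transforming back with $T=[\,V_{\mc P}\ \bar V_{\mc P}\,]$ and $T^{-1}=T^{\transpose}$ gives $\Delta^{*}=-\bar V_{\mc P}\tilde A_{21}V_{\mc P}^{\transpose}$, and identifying $\tilde A_{21}=\bar V_{\mc P}^{\transpose}\bar A V_{\mc P}$ as the $(2,1)$ block of $T^{-1}\bar A T$ yields exactly $\Delta^{*}=-\bar V_{\mc P}\bar V_{\mc P}^{\transpose}\bar A V_{\mc P}V_{\mc P}^{\transpose}$.

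As an independent sanity check — and arguably the cleanest route — I would observe that \eqref{eq:optimproblem} without sparsity constraints is precisely the minimum‑Frobenius‑norm solution of the linear matrix equation $\mathcal{A}(\Delta)=-\bar V_{\mc P}^{\transpose}\bar A V_{\mc P}$, where $\mathcal{A}(\Delta)=\bar V_{\mc P}^{\transpose}\Delta V_{\mc P}$. Its adjoint is $\mathcal{A}^{*}(Y)=\bar V_{\mc P}Y V_{\mc P}^{\transpose}$, and orthonormality of the columns of $V_{\mc P}$ and $\bar V_{\mc P}$ gives $\mathcal{A}\mathcal{A}^{*}=\mathrm{id}$, hence $\mathcal{A}^{\dagger}=\mathcal{A}^{*}$ and the unique minimizer is $\mathcal{A}^{*}\!\left(-\bar V_{\mc P}^{\transpose}\bar A V_{\mc P}\right)=-\bar V_{\mc P}\bar V_{\mc P}^{\transpose}\bar A V_{\mc P}V_{\mc P}^{\transpose}$, matching the claim.

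I do not expect a genuine obstacle here, since the statement is a corollary; the one point requiring care is the normalization of $V_{\mc P}$. As given in Definition~\ref{def: characteristic subspace} its columns are unnormalized indicator vectors, so before using the identities $V_{\mc P}^{\transpose}V_{\mc P}=I_m$, $\bar V_{\mc P}^{\transpose}\bar V_{\mc P}=I_{n-m}$ and $T^{-1}=T^{\transpose}$ one must either rescale $V_{\mc P}$ and $\bar V_{\mc P}$ to orthonormal columns — which alters neither $\Image(V_{\mc P})$ nor the feasible set of \eqref{eq:optimproblem} — or carry the diagonal Gram factors through the computation. With that convention fixed, the remaining manipulations are routine matrix algebra.
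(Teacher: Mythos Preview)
Your proposal is correct. Your primary route---specializing the all-ones mask $H$ so that $X=\bar V_{\mc P}\Lambda V_{\mc P}^{\transpose}$ and then reading off $\Lambda=\tilde A_{21}$ and $\Delta^{*}$---is essentially what the paper does, except that the paper works directly from the intermediate Lagrangian optimality conditions \eqref{eq:optimality cond matrix 1}--\eqref{eq:optimality cond matrix 2} (setting $\Delta\oslash H=\Delta$, then pre-/post-multiplying by $\bar V_{\mc P}^{\transpose}$ and $V_{\mc P}$) rather than from the block formulas in the statement of Theorem~\ref{thm:optim constraints}; the algebra and the use of $V_{\mc P}^{\transpose}V_{\mc P}=I$, $\bar V_{\mc P}^{\transpose}\bar V_{\mc P}=I$ are identical. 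Your second, pseudoinverse argument via $\mathcal A^{\dagger}=\mathcal A^{*}$ is a genuinely cleaner alternative that the paper does not give: it bypasses the Lagrangian entirely and makes feasibility and uniqueness immediate from surjectivity of $\mathcal A$. Your caveat about the normalization of $V_{\mc P}$ is apt---the paper silently assumes $V_{\mc P}^{\transpose}V_{\mc P}=I$ and $T^{-1}=T^{\transpose}$ despite Definition~\ref{def: characteristic subspace}, so your remark that one should first rescale to orthonormal columns (which leaves $\Image(V_{\mc P})$ and the constraint set unchanged) is a worthwhile clarification.
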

\begin{proof}
  Because $h_{ij} = 1$ for all $i$ and $j$, the optimality condition
  \eqref{eq:optimality cond matrix 2} becomes
\begin{align*}
\Delta + \bar V_\mc P \Lambda V_\mc P^{\transpose} = 0.
\end{align*}
We now pre- and post-multiply both sides of the above equality by
$\bar V_\mc P^\transpose$ and $V_\mc P$, respectively, and obtain
\begin{subequations}
\begin{align*}
\Lambda & = \bar{V}_{\mc P}^{\transpose} \bar{A} V_{\mc P}, \;
\Delta^{*}  = -\bar{V}_{\mc P}\bar{V}_{\mc P}^{\transpose} \bar{A} V_{\mc P} V_{\mc P}^{\transpose},
\end{align*}
\end{subequations}
where we have used \eqref{eq:optimality cond matrix 1},
$V_\mc P^\transpose V_\mc P = I$, and
$\bar V_\mc P^\transpose \bar V_\mc P = I$.
\end{proof}

We now present an example where we modify the network weights to
ensure synchronization of a desired partition.

\begin{example}{\bf \emph{(Enforcing synchronization of a
      partition)}}\label{example: sync network}
  Consider the network in Fig. \ref{fig:example synchronization
    2}(a). The dashed edges and the solid edges represent constrained
  and uncostrained edges, respectively. The corresponding matrices
  $\bar{A}$ and $H$ read as {\small
  \begin{align*}
     \bar{A} = 
    \begin{bmatrix}
      0 & 0 & 0 & 0 & 0 & 12\\
      0 & 0 & 0 & 5 & 0 & 5\\
      0 & 0 & 0 & 0 & 10 & 0\\
      9 & 0 & 0 & 0 & 0 & 0\\
      0 & 9 & 0 & 0 & 0 & 0\\
      0 & 7 & 2 & 0 & 0 & 0
    \end{bmatrix}
                          ,
    H = 
    \begin{bmatrix}
      0 & 1 & 1 & 0 & 0 & 0\\
      1 & 0 & 1 & 0 & 1 & 0\\
      1 & 1 & 0 & 0 & 1 & 1\\
      0 & 1 & 1 & 0 & 1 & 1\\
      1 & 1 & 1 & 1 & 0 & 1\\
      1 & 0 & 0 & 1 & 1 & 0
    \end{bmatrix}                .
  \end{align*}}
  Notice that $H$ allows only a subset of interconnections
  to be modified, specifically, those corresponding to its unit entries.

  It can be shown that, because condition (i) in Theorem
  \ref{thm:cluster sync} is not satisfied (equivalently
  $\bar{V}_{\mc P}^{\transpose} \bar{A }V_{\mc P} \neq 0$), the
  network is not phase synchronizable (see Fig. \ref{fig:example
    synchronization 2}(b) and \ref{fig:example synchronization 2}(c) for
  an evolution of the oscillators' phases and frequencies). From
  Theorem \ref{thm:optim constraints} we obtain the optimal
  perturbation that ensures synchronization, which leads to the
  network in Fig. \ref{fig:example synchronization 2}(d). Notice that
  the network in Fig. \ref{fig:example synchronization 2}(d) satisfies
  condition (i) in Theorem \ref{thm:cluster sync}. In fact, when the
  natural frequencies are equal within each cluster (condition (ii) in
  Theorem \ref{thm:cluster sync}), the clusters evolve cohesively; see
  Fig. \ref{fig:example synchronization 2}(e) and \ref{fig:example
    synchronization 2}(f).
      \oprocend

\end{example}

\section{Conclusion}\label{future research and conclusions}
In this work we study cluster synchronization in networks of Kuramoto
oscillators. We derive necessary and sufficient conditions on the
network interconnection weights and on the oscillators' natural
frequencies to guarantee that the phases of groups of oscillators
evolve cohesively with one another, yet independently from the phases
of oscillators belonging to different groups. Additionally, we develop
a control mechanism to modify the edges of a network to ensure the
formation of desired clusters. Our control method is optimal, as it
determines the smallest perturbation (measured by the Frobenius norm)
for a desired synchronization pattern that is compatible with a
pre-specified set of structural constraints.


\bibliographystyle{unsrt}
\bibliography{./alias,./Main,./FP}

\end{document}